\theoremstyle{plain}
\theoremstyle{plain}
\newtheorem{thm}{\textsf{\textbf{Theorem}}}[section]
\newtheorem*{thm*}{\textsf{\textbf{Theorem}}}
\theoremstyle{definition}
\newtheorem{dfn}[thm]{\textbf{\textsf{Definition}}}
\newtheorem{rem}[thm]{{\textsf{Remark}}}
\DeclareMathOperator{\clu}{cl}
\DeclareMathOperator{\mesh}{mesh}
\DeclareMathOperator{\vx}{V}
\DeclareMathOperator{\edges}{E}
\DeclareMathOperator{\leb}{l}
\DeclareMathOperator{\diam}{diam}
\DeclareMathOperator{\topol}{top}
\DeclareMathOperator{\dist}{dist}
\newcommand{\Cee }{\mathcal C}
\newcommand{\Gee }{\mathcal G}
\newcommand{\Fee }{\mathcal F}
\newcommand{\nat }{\mathbb N}
\newcommand{\Uee }{\mathcal U}
\newcommand{\Vee }{\mathcal V}
\newcommand{\See }{\mathcal S}
\title{Graph covers of higher dimensional dynamical systems}
\author{Przemysław Kucharski\footnote{
		 }}
\newcommand{\cl}[1]{\clu{#1}}
\newcommand{\p}[1]{\hat{#1}}
\newcommand{\eps}[1]{(#1)^{\epsilon_{i-1}}}
\newcommand{\epsi}[1]{(#1)^{\epsilon_{i}}}
\newcommand{\indexset}[1]{\iota(#1)}
\providecommand{\keywords}[1]
{
	\small	
	\textbf{\textit{Keywords---}} #1
}
\providecommand{\declarationofinterest}[1]
{
	\small	
	\textbf{\textit{Declaration of interest---}} #1
}
\begin{document}
	\maketitle
\begin{abstract}
	We generalize the notion of an inverse sequence of graph covers from the zero-dimensional dynamical systems \cite{Shimomura2014} to any dynamical system.
\end{abstract}
\keywords{inverse limits, graph covers, non zero dimensional dynamical systems}\\
\declarationofinterest{Author declares that there are no financial or personal relationships with other people or organizations that could inappropriately influence this work.}
\section*{Introduction}
In the present paper, all spaces under consideration are compact and metric. In 2006 Gambaudo and Martens proved that every minimal dynamical system on a Cantor set can be obtained as an inverse limit of directed graphs. This result allowed them to connect minimal dynamics on a Cantor set with its homology, homotopy and cohomology. In 2012 Bernardes and Darji \cite{BD} used similar approach to study the comeager conjugacy class of the homeomorphism group of the Cantor set. In 2014 Shimomura generalized the methods of Gambaudo and Martens to all zero-dimensional systems \cite{Shimomura2014}, resulting in Theorem \ref*{shimomura-zerdim-charact}. In \cite{CHL} Clark, Hurder and Lukina generalized the approach to obtain representations of suspensions of pseudogroup actions on Cantor sets as inverse limits of branched manifolds. Since then graph covers have proved to be a powerful tool in dynamics. Among many results, they have been used to  construct completely scrambled Cantor systems, with various additional properties; see \cite{Shimomura_2015} and \cite{boronski_kupka_oprocha_2019}. In  2016 Shimomura characterised one-sided Toeplitz flows using graph covers \cite{SHIMOMURA201663}, and then studied the relation between the ergodic measures of a system and circuits of graph covers \cite{Shimomura2016ergodicity}, expanding on conditions of graph covers for minimal systems. In their recent breaktrough, Good and Meddaugh \cite{goodmedd} used similar methods to determine which compact zero-dimensional dynamical systems have shadowing property. Also very recently, a relation between Bratelli-Vershik diagrams and graph covers has been established \cite{Shimomura_2020}. In \cite{cantorslow} the authors used the method to show that any Cantor minimal system can be embedded into the real line with vanishing derivative everywhere, see also \cite{Boroski2017EdreisCR}.
It seems that it is only a matter of time before the method\footnote{The method is often referred to as "dynamical coding".} will find new applications in the study of other dynamical invariants, such as topological entropy. 

Therefore it seems very desirable to know if it is possible to push the graph covers method outside the zero-dimensional setting. Analysing proofs of Gambaudo and Martens from \cite{Gambaudo2006AlgebraicTF}, and then proofs of Shimomura from \cite{Shimomura2014} and \cite[Thm. 3.4.]{zerodim-takashi-BV} we realize that for an arbitrary dynamical system $(X,f)$, without a clopen base, we cannot uniquely determine points in $X$. Therefore the construction similar to \cite[Thm. 3.4.]{zerodim-takashi-BV} will lead us at best to a system semi-conjugate to $(X,f)$, which is not good enough for most applications. In this article we introduce a new kind of graph covers, called an inverse sequence of twinned graph homomorphisms, that remedies the above problem, and allows us to construct by the way of graph covers a conjugacy of a given system.
 
\section{Preliminaries}
For a subset $U$ of a topological space $X$ we denote by $\cl U$ the closure of $U$. A pair $(X,f)$, where $X$ is a compact metric space and $f \colon X \rightarrow X$ is a continuous mapping will be called a \emph{dynamical system}. Note that we do not demand that $f$ is surjective. If $d$ is a metric on $X$, $q\in X$ a point and $V\subset X$ any subset, we define the distance between $q$ and $V$ by $\dist(q,V):=\inf_{y\in V}d(q,y)$. For $\epsilon>0$, we define an $\epsilon$-neighbourhood of $V$ by $B(V,\epsilon):=\{x\in X\colon \dist(x,V)<\epsilon\}$. We will say that a dynamical system $(Y,g)$ is a \emph{factor} of $(X,f)$, if there exists a continuous surjective mapping $\phi \colon X \rightarrow Y$, called a factor map, such that $g \circ \phi =\phi \circ f$. On the other hand if $\phi$ is a homeomorphism it will be referred to as an \emph{isomorphism} and we will say that systems $(Y,g)$ and $(X,f)$ are conjugated. Diameter of a subset $U$ of a space $X$ will be denoted by $\diam U$. Let $\mathcal{U}$ be a family of subsets of $X$ we define the mesh number of $\mathcal{U}$ as $\mesh \mathcal{U}= \sup_{U \in \mathcal{U}} \diam(U)$. We will write $\leb (\mathcal{U})$ for the \emph{Lebesgue number} of an open cover $\mathcal{U}$. Any function $f\colon X\to X$ defines a new family $f(\Uee):=\{f(U)\colon U\in\Uee\}$. To simplify notation, by writing $x \in \prod_{i=0}^{\infty}K_i$, we will always mean $x=(x_i)^{\infty}_{i=0}$. We will sometimes omit scripts and write simply $(x_{i})$.  

Let $V$ be a finite set. A pair $G=(V,E)$ will be called a (non directed) \emph{graph}, if the set of \emph{edges} $E$ is a binary (symmetric) relation on the set $V$ of \emph{vertices}. We will sometimes write $\vx(G)=V$ and $\edges(G)=E$. If for every vertex $v\in V$ we can find $u,w\in V$ with $(u,v),(v,w)\in E$, then we will say that $G$ is edge surjective. Given (non directed) graphs $G_1=(V_1,E_1)$ and $G_2=(V_2,E_2)$ a mapping $\phi \colon V_1 \rightarrow V_2$ will be called \emph{a (non directed) graph homomorphism}, if for any pair $(u,v) \in E_1$ we have $(\phi (u),\phi (v)) \in E_2$. Function $\phi$ naturally induces mapping defined on the set of edges, we will denote it by the same letter, and write $\phi \colon G_1 \rightarrow G_2$. Moreover if $\phi(E_{1})=E_{2}$, then we will refer to $\phi$ as an edge-surjective homomorphism. After Shimomura \cite{Shimomura2014}, a graph homeomorphism $\phi$ will be called $+$directional if for any edges $(u,v),(u,w)\in E_{1}$ we have $\phi(v)=\phi(w)$. If $G_{1}$ and $G_{2}$ are edge surjective and $\phi$ is $+$directional edge-surjective homomorphism then $\phi$ will be called a graph cover.

Let $\mathcal{R}\subset V\times W$ and $\mathcal{Q}\subset W\times U$ be binary relations. By composition $\mathcal{Q}\mathcal{R}$ we mean a relation on $V\times U$ defined by $(v,u)\in \mathcal{Q}\mathcal{R}$ iff there exists $w\in W$ such that $(v,w)\in \mathcal{R}$ and $(w,u)\in \mathcal{Q}$. We also denote by $\mathcal{R}v=\{w\in W~|~(v,w)\in\mathcal{R}\}$ the image of $v$ under $\mathcal{R}$. If $\mathcal R=V\times V$ is an equivalence relation, then by $[e]_{\mathcal R}\in V/_{\mathcal R}$ we define the equivalence class of $e\in V$. We define a projection on equivalence classes $\pi_{\mathcal R}(v)=[v]_{\mathcal R}$. For better clarity, we will sometimes omit brackets and write $\pi_{\mathcal R}v$.

On the product $X=\prod_{i=0}^{\infty}X_i$ of topological spaces $X_{i}$ we will always consider a Tychonoff product topology. Assume that $X_m$ are finite, for every $m\in\nat$. Then for $n \in \mathbb{N}$, a \emph{cylinder} $\Cee(w,n)$ defined by a sequence $w \in \prod_{i=0}^{n}X_i$ is given by the formula $\Cee(w,n) = \{ y \in X:  w = (y_0,...,y_{n}) \}$. Note that on the inverse limit of sequence $(X_i,\rho_{i})$ it is enough to specify the last element of sequence $w$, that is cylinders are given by $\Cee(x_{n},n)=\{ y \in \varprojlim (X_i,\rho_{i}):  y_{n} = x_{n} \}$ for $x_{n}\in X_{n}$. Similarly we define $\Cee(x,n)=\Cee(x_{n},n)$ for $x=(x_{i})\in \varprojlim (X_i,\rho_{i})$.

\section{Inverse limits of graph covers}
Following Shimomura \cite{Shimomura2014} we define an inverse limit of graph covers.
\begin{dfn}
	An inverse sequence $\mathcal{G}=(G_i,\phi_{i})_{i\in \nat}$ will be called an inverse sequence of graph homomorphisms if for every $i\in\nat$ mapping $\phi_{i}\colon G_{i}\to G_{i-1}$ is a graph homomorphism. If additionally, $\phi_{i}$ is a graph cover, $\mathcal{G}$ will be called an inverse sequence of graph covers.
\end{dfn}
\begin{dfn}\label{inv-limit-graphs}
	Let $(G_i,\phi_{i})$ be an inverse sequence and $\phi_{i}\colon G_{i}\to G_{i-1}$ be graph homomorphisms. We define the inverse limit of graph homomorphisms $(V_{\mathcal{G}},E_{\mathcal{G}})$ by
	\[
	V_{\mathcal{G}}:=\{(x_0,x_1,...)\in \prod_{i=0}^{\infty}V_i: x_i=\phi_{i+1}(x_{i+1}),i=0,1,...\} = \varprojlim (V_i,\phi_{i})
	,\]
	\[
	E_{\mathcal{G}}:=\{(x,y)\in V_{\mathcal{G}}\times V_{\mathcal{G}}: (x_i,y_i)\in E_i, i=0,1,...\}
	.\]
\end{dfn}
\begin{rem}
	Note that $E_{\mathcal{G}}$ is a binary relation on non empty compact metric space $V_{\mathcal{G}}$. In general $E_{\mathcal{G}}$ does not need to be a function, but if $(G_i,\phi_{i})$ is additionally an inverse sequence of graph covers, that is functions $\phi_{i}$ are edge-surjective, positive directional homomorphisms and $G_{i}$ are edge surjective, it is a matter of simple check, that $E_{\mathcal{G}}$ is a continuous function, as stated in Theorem \ref{c-grafow} (compare with \cite[Lem. 3.5]{Shimomura2014}).
\end{rem}
\begin{thm} \label{c-grafow}
	Let $\mathcal{G}=(G_i,\phi_i)$ be an inverse sequence of graph covers. Then a pair $(V_{\mathcal{G}},E_{\mathcal{G}})$ is zero-dimensional surjective dynamical system.
\end{thm}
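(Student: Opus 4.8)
The plan is to verify the three requirements of the statement separately: that $V_{\mathcal{G}}$ is a nonempty compact metric zero-dimensional space, that $E_{\mathcal{G}}$ is a continuous self-map of $V_{\mathcal{G}}$, and that this map is surjective.

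First I would handle the phase space $V_{\mathcal{G}} = \varprojlim(V_i,\phi_i)$. Each $V_i$ is finite, hence compact, metric and zero-dimensional, and $V_{\mathcal{G}}$ is a closed subset of the product $\prod_i V_i$; since products and subspaces preserve these three properties, $V_{\mathcal{G}}$ inherits them. To see it is nonempty I would first check that each vertex map $\phi_i\colon V_i\to V_{i-1}$ is surjective: given $v\in V_{i-1}$, edge-surjectivity of $G_{i-1}$ supplies an edge incident to $v$, and edge-surjectivity of $\phi_i$ lifts that edge to one of $G_i$ whose appropriate endpoint maps onto $v$. An inverse limit of nonempty finite sets with surjective bonding maps is nonempty, so $V_{\mathcal{G}}\neq\emptyset$.

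Next comes the core claim that $E_{\mathcal{G}}$ is a single-valued, everywhere-defined function. Fixing $x\in V_{\mathcal{G}}$, I would study the image sets $A_i := E_i x_i = \{z\in V_i : (x_i,z)\in E_i\}$, which are nonempty by edge-surjectivity of $G_i$. The key mechanism is the $+$directional property of $\phi_i$: every edge counted in $A_i$ issues from the common source $x_i$, so $\phi_i$ is constant on $A_i$, that is $\phi_i(A_i)=\{a_{i-1}\}$ is a single point, and homomorphy places $a_{i-1}\in A_{i-1}$. Setting $y_i:=a_i$ then yields a point of $V_{\mathcal{G}}$ with $(x,y)\in E_{\mathcal{G}}$, giving existence; uniqueness follows from the same collapse, since any $y$ with $(x,y)\in E_{\mathcal{G}}$ satisfies $y_{i-1}=\phi_i(y_i)\in\phi_i(A_i)=\{a_{i-1}\}$, so all coordinates of $y$ are forced. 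Continuity is then immediate from this description: the coordinate $E_{\mathcal{G}}(x)_n=a_n$ depends only on $x_{n+1}$, so $E_{\mathcal{G}}$ is constant on each cylinder $\Cee(x_{n+1},n+1)$ and hence continuous.

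Finally, for surjectivity I would fix $y\in V_{\mathcal{G}}$ and consider the preimage sets $B_i := \{z\in V_i : (z,y_i)\in E_i\}$, nonempty by edge-surjectivity of $G_i$ and satisfying $\phi_i(B_i)\subseteq B_{i-1}$ because $\phi_i(y_i)=y_{i-1}$ and $\phi_i$ is a homomorphism. I expect this backward direction to be the main obstacle, since the $+$directional collapse used above is unavailable here — the restrictions $\phi_i|_{B_i}$ need be neither constant nor surjective — so one cannot simply read off a coordinate sequence. Instead I would argue by compactness: the partial-compatibility sets $C_n=\{z\in\prod_i B_i : \phi_j(z_j)=z_{j-1},\ 1\le j\le n\}$ are closed, nested and nonempty (build a compatible string downward from any $z_n\in B_n$ using $\phi_j(B_j)\subseteq B_{j-1}$, then fill higher coordinates arbitrarily), so by the finite intersection property in the compact space $\prod_i B_i$ the intersection $\varprojlim(B_i,\phi_i|_{B_i})$ is nonempty. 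Any element $x$ of it lies in $V_{\mathcal{G}}$ and satisfies $(x,y)\in E_{\mathcal{G}}$, that is $E_{\mathcal{G}}(x)=y$.
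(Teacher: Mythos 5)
Your proof is correct and complete. Note that the paper itself does not actually prove this theorem --- it is dismissed in the preceding remark as ``a matter of simple check'' with a pointer to Shimomura's Lemma 3.5 --- and your argument supplies exactly the standard verification that reference performs: the $+$directional property collapses the successor sets $E_i x_i$ under $\phi_i$ to give single-valuedness and continuity of $E_{\mathcal{G}}$, edge-surjectivity of the graphs and of the bonding maps gives nonemptiness, and the finite-intersection (compactness) argument on the predecessor sets $B_i$ handles surjectivity, which you rightly identify as the one direction where the directional collapse is unavailable.
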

Below we present slightly extended version of \cite[Lem. 3.5]{Shimomura2014}.
\begin{dfn}\label{df1}
	For a function $f \colon X \rightarrow X$ and arbitrary cover $\mathcal{U}$ of a topological space $X$, we define the following relation
	\[
	f^{\mathcal{U}}:=\{(U,V) \in \mathcal{U} \times \mathcal{U} ~ | ~ f(U)\cap V \neq \emptyset\}.
	\]
\end{dfn}
\begin{thm}\label{zero->ciag-odw}
	Let $(X,f)$ be a surjective zero-dimensional dynamical system. Then there exists a sequence $ \{\mathcal{U}_{i}\}_{i\in\nat} $ of partitions of $X$ into clopen covers with $\lim_{i\to\infty}\mesh\mathcal{U}_{i} =0$ and a sequence of mappings $\{\phi_{i}\colon\mathcal{U}_{i}\to\mathcal{U}_{i-1}\}_{i\in\nat}$ induced by inclusion, such that $\mathcal{G}=((\mathcal{U}_{i},f^{\mathcal{U}_{i}}),\phi_i)$ is an inverse sequence of graph covers and $(V_{\mathcal{G}},E_{\mathcal{G}})$ is conjugate to $(X,f)$.
\end{thm}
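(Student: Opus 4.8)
The plan is to build the refining sequence of clopen partitions by hand, read off that the inclusion-induced maps are automatically graph covers, and then identify the inverse limit with $X$ through the ``intersection of nested cells'' map. Since $X$ is compact, metric and zero-dimensional it carries finite clopen partitions of arbitrarily small mesh. So I would fix any finite clopen partition $\mathcal{U}_0$ and, having defined $\mathcal{U}_{i-1}$, take $\mathcal{U}_i$ to be a common refinement of three finite clopen partitions: $\mathcal{U}_{i-1}$ itself, the partition $f^{-1}(\mathcal{U}_{i-1})=\{f^{-1}(U')\colon U'\in\mathcal{U}_{i-1}\}$ (clopen because $f$ is continuous and the cells are clopen), and some clopen partition of mesh $<2^{-i}$. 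A common refinement of finite clopen partitions is again a finite clopen partition, and refining cannot increase the mesh, so $\mesh\mathcal{U}_i<2^{-i}$ and $\lim_i\mesh\mathcal{U}_i=0$. Because $\mathcal{U}_i$ refines $\mathcal{U}_{i-1}$ and both are partitions, every $U\in\mathcal{U}_i$ lies in a unique cell of $\mathcal{U}_{i-1}$, which defines the inclusion-induced vertex map $\phi_i(U)$ and hence its action on edges.

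Next I would verify the four conditions making $\mathcal{G}=((\mathcal{U}_i,f^{\mathcal{U}_i}),\phi_i)$ an inverse sequence of graph covers. That $\phi_i$ is a homomorphism is automatic: if $f(U)\cap V\neq\emptyset$ then $f(\phi_i U)\cap\phi_i V\supseteq f(U)\cap V\neq\emptyset$. Edge-surjectivity of $\phi_i$ is equally routine: given $(U',V')\in f^{\mathcal{U}_{i-1}}$, pick $y\in f(U')\cap V'$ and $x\in U'$ with $f(x)=y$; the cells of $\mathcal{U}_i$ containing $x$ and $y$ lie in $U'$ and $V'$ respectively and witness an edge mapping onto $(U',V')$. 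Edge-surjectivity of the graphs themselves is where surjectivity of $f$ enters: every cell $U$ is nonempty, so $f(U)$ meets some cell (an outgoing edge), and since $f(X)=X$ every cell contains a point $f(x)$ (an incoming edge).

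The genuinely new constraint is $+$directionality, and this is why I inserted $f^{-1}(\mathcal{U}_{i-1})$ into the refinement: each $U\in\mathcal{U}_i$ is contained in a single cell of $f^{-1}(\mathcal{U}_{i-1})$, so $f(U)$ lies inside one cell $V'\in\mathcal{U}_{i-1}$. Hence whenever $(U,V),(U,W)\in f^{\mathcal{U}_i}$ both $V$ and $W$ meet $f(U)\subseteq V'$, and since a cell of $\mathcal{U}_i$ meeting $V'$ must be contained in $V'$, we get $\phi_i V=V'=\phi_i W$. This shows each $\phi_i$ is a graph cover, and Theorem \ref{c-grafow} then makes $(V_{\mathcal{G}},E_{\mathcal{G}})$ a zero-dimensional surjective system with $E_{\mathcal{G}}$ a continuous self-map. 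I expect this step to be the main obstacle --- not the verification itself, but recognising that $+$directionality is equivalent to ``$f$ sends each fine cell into a single coarse cell'' and that this can be forced by refining with $f^{-1}(\mathcal{U}_{i-1})$ without spoiling the mesh condition or the other properties.

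Finally I would produce the conjugacy. A point of $V_{\mathcal{G}}=\varprojlim(\mathcal{U}_i,\phi_i)$ is a nested sequence of nonempty clopen cells $U_0\supseteq U_1\supseteq\cdots$; since $\diam U_i\le\mesh\mathcal{U}_i\to 0$ and $X$ is compact, $\bigcap_i U_i$ is a single point, so I set $h((U_i)):=\bigcap_i U_i$, with inverse sending $x$ to the sequence $(U_i(x))_i$ of cells containing it. Two sequences agreeing in coordinate $n$ have $h$-images within $\mesh\mathcal{U}_n$ of each other, so $h$ is continuous, hence a homeomorphism between compact metric spaces. To see it intertwines the dynamics, fix $x=(U_i)$ with $p=h(x)$ and set $q=f(p)$; then $p\in U_i$ gives $q\in f(U_i)$, and $q$ lies in its own cell $U_i(q)$, so $f(U_i)\cap U_i(q)\neq\emptyset$ for all $i$, i.e. $(x,(U_i(q))_i)\in E_{\mathcal{G}}$. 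As $E_{\mathcal{G}}$ is a function, $E_{\mathcal{G}}(x)=(U_i(q))_i=h^{-1}(q)$, whence $h(E_{\mathcal{G}}(x))=q=f(h(x))$, so $h$ is a conjugacy.
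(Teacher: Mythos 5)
Your proposal is correct, and it is essentially the standard argument this result rests on: the paper itself states the theorem without proof, deferring to Shimomura's Lemma 3.5, and that proof runs exactly along your lines --- refine each clopen partition by $\mathcal{U}_{i-1}$ and $f^{-1}(\mathcal{U}_{i-1})$ (plus a partition of mesh $<2^{-i}$) so that $f$ maps each fine cell into a single coarse cell, which is precisely $+$directionality, and then identify $V_{\mathcal{G}}$ with $X$ via the nested-intersection map. All of your verifications are sound, including the key closing step that $E_{\mathcal{G}}$ is single-valued (by Theorem \ref{c-grafow}), so that exhibiting one edge sequence $(U_i(q))_i$ with $(x,(U_i(q))_i)\in E_{\mathcal{G}}$ suffices to compute $E_{\mathcal{G}}(x)$ and obtain the conjugacy.
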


Combining Theorems \ref{c-grafow} and \ref{zero->ciag-odw} we obtain 
\begin{thm}\label{shimomura-zerdim-charact}
	A surjective dynamical system $(X,f)$ is zero-dimensional if and only if it is conjugate to an inverse limit of graph covers.
\end{thm}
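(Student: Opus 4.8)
The plan is to obtain the statement as a purely formal consequence of Theorems \ref{c-grafow} and \ref{zero->ciag-odw}, proving the two implications separately and relying on the fact that both zero-dimensionality and surjectivity are invariant under conjugacy.

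First I would prove the ``only if'' implication. Assume $(X,f)$ is surjective and zero-dimensional. Then the hypotheses of Theorem \ref{zero->ciag-odw} are met verbatim, so that theorem hands us a sequence of clopen partitions $\{\mathcal{U}_i\}$ with vanishing mesh, inclusion-induced maps $\phi_i$, and an inverse sequence of graph covers $\mathcal{G}=((\mathcal{U}_i,f^{\mathcal{U}_i}),\phi_i)$ whose inverse limit $(V_{\mathcal{G}},E_{\mathcal{G}})$ is conjugate to $(X,f)$. Since $(V_{\mathcal{G}},E_{\mathcal{G}})$ is by construction an inverse limit of graph covers, this direction is immediate.

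Next I would prove the ``if'' implication. Suppose $(X,f)$ is conjugate to $(V_{\mathcal{G}},E_{\mathcal{G}})$ for some inverse sequence of graph covers $\mathcal{G}$, and let $h\colon V_{\mathcal{G}}\to X$ denote the conjugating homeomorphism. By Theorem \ref{c-grafow} the system $(V_{\mathcal{G}},E_{\mathcal{G}})$ is a zero-dimensional surjective dynamical system. Zero-dimensionality is a topological property, hence preserved by the homeomorphism $h$, so $X$ is zero-dimensional; surjectivity likewise transfers across $h$ (and is in any case part of the standing hypothesis on $(X,f)$). This shows $(X,f)$ is zero-dimensional and closes the equivalence.

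The combination carries essentially no obstacle of its own: all of the genuine work is already packaged into Theorems \ref{c-grafow} and \ref{zero->ciag-odw}, the latter being by far the more substantial. The only point deserving a word of care is that the transfer of zero-dimensionality from $(V_{\mathcal{G}},E_{\mathcal{G}})$ to $(X,f)$ rests on the conjugacy being a homeomorphism and on dimension being a topological invariant, rather than on anything dynamical; once this is noted, each implication closes in a single line.
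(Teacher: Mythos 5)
Your proposal is correct and takes exactly the same route as the paper, which states this theorem as an immediate consequence of combining Theorems \ref{c-grafow} and \ref{zero->ciag-odw}; your write-up merely makes explicit the one-line bookkeeping (Theorem \ref{zero->ciag-odw} for the ``only if'' direction, Theorem \ref{c-grafow} together with the topological invariance of zero-dimensionality for the ``if'' direction) that the paper leaves implicit.
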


\section{Inverse sequences of twinned graph homomorphisms}

\begin{dfn}
	We will call a pair $(\mathcal{G},\mathcal{F})$ an inverse sequence of twinned graph homomorphisms, if
	\begin{enumerate}[label=(DS\arabic*)]
		\item\label{DS0} $|\vx(G_0)|=1$,
		\item\label{DS1} $\mathcal{G}=(G_i,\phi_i)$ is an inverse sequence of graphs homomorphisms, every vertex in $G_{i}$ has outgoing edge and $\phi_{i}$ are edge-surjective for $i>0$,
		\item\label{DS2} $\mathcal{F}=(F_i,\phi_i)$ is an inverse sequence of non directed graph homomorphisms such that $\vx(G_{i})=\vx(F_{i})$, $(v,v)\in \edges(F_{i})$ for every $v\in\vx(F_{i})$ and $i\geq0$,
		\item\label{DS3} if $(a,b)\in \edges (F_i)$ and $(a,a'),~(b,b')\in E(G_{i})$ for some $a',~b'\in V(G_{i})$, then $(\phi_{i}(a'),\phi_{i}(b'))\in E(F_{i-1})$, where $i>0$,
		\item\label{DS3b} if $(a,b),(b,c)\in \edges (F_i)$, then $(\phi_{i}(a),\phi_{i}(c))\in\edges (F_{i-1})$, where $i>0$.
	\end{enumerate}
\end{dfn}
\begin{rem}
	Note that Theorem \ref{shimomura-zerdim-charact} requires dynamical system to be surjective. If we want to obtain any zero-dimensional dynamical system as an inverse limit of graph covers, we would have to adopt different definition of graph cover, one that does not require graphs to be edge-surjective. For greater generality and clarity we do not impose assumption of surjectivity in our characterisation of higher dimensional dynamical systems.
\end{rem}
Following Definition \ref{inv-limit-graphs} we can define relations $E_{\Gee}$, $E_{\Fee}$ and set $V_{\Gee}$. Then quotient space ${V_{\mathcal{G}}}/_{E_{\mathcal{F}}}$ is well defined and gives rise to a new dynamical system. Let $\pi\colon V_{\mathcal{G}}\to {V_{\mathcal{G}}}/_{E_{\mathcal{F}}}$ be the projection on equivalence classes.
\begin{thm}\label{graph->space, nonzero}
	If $(\mathcal{G},\mathcal{F})$ is an inverse sequence of twinned graph homomorphisms, then $E_{\mathcal{F}}$ is an equivalence relation and $(Y,T)$ is a dynamical system, where $Y={V_{\mathcal{G}}}/_{E_{\mathcal{F}}}$ and $T$ is defined by the formula $\{T(\pi v)\}=\pi E_{\mathcal{G}}(v)$ for $v\in V_{\Gee}$.
\end{thm}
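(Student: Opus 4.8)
The plan is to verify the equivalence-relation axioms for $E_{\mathcal{F}}$ directly from \ref{DS2} and \ref{DS3b}, and then to build $T$ by lifting $E_{\mathcal{G}}$ to the quotient. Reflexivity of $E_{\mathcal{F}}$ is immediate: by \ref{DS2} we have $(v,v)\in E(F_i)$ for every $i$ and every $v\in\vx(F_i)$, so $(x,x)\in E_{\mathcal{F}}$ for all $x\in V_{\mathcal{G}}$. Symmetry is equally immediate, since each $F_i$ is non directed, hence $E(F_i)$ is symmetric and the defining conditions of $E_{\mathcal{F}}$ are coordinatewise symmetric. For transitivity I would use \ref{DS3b} together with the infiniteness of the index set: given $(x,y),(y,z)\in E_{\mathcal{F}}$ and any fixed $i$, I apply \ref{DS3b} at level $i+1$ to the pairs $(x_{i+1},y_{i+1}),(y_{i+1},z_{i+1})\in E(F_{i+1})$, obtaining $(\phi_{i+1}(x_{i+1}),\phi_{i+1}(z_{i+1}))\in E(F_i)$; since $x,z\in V_{\mathcal{G}}$ are coherent sequences this reads $(x_i,z_i)\in E(F_i)$, and as $i$ was arbitrary we conclude $(x,z)\in E_{\mathcal{F}}$.

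For the dynamical system I first record that $E_{\mathcal{F}}$ is closed in $V_{\mathcal{G}}\times V_{\mathcal{G}}$, being an intersection over $i$ of the clopen conditions $(x_i,y_i)\in E(F_i)$ (each $V_i$ is finite, so $E(F_i)$ is clopen and the coordinate projections are continuous). Hence $Y=V_{\mathcal{G}}/_{E_{\mathcal{F}}}$ is the quotient of a compact metric space by a closed equivalence relation, which is again compact and metrizable, and the projection $\pi$ is a continuous closed surjection.

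The key step is to show that $T$ is well defined, for which I would prove the single auxiliary statement: if $(v,v')\in E_{\mathcal{F}}$, $w\in E_{\mathcal{G}}(v)$ and $w'\in E_{\mathcal{G}}(v')$, then $(w,w')\in E_{\mathcal{F}}$. This follows from \ref{DS3} applied levelwise: fixing $i$ and working at level $i+1$, the data $(v_{i+1},v'_{i+1})\in E(F_{i+1})$ and $(v_{i+1},w_{i+1}),(v'_{i+1},w'_{i+1})\in E(G_{i+1})$ yield $(\phi_{i+1}(w_{i+1}),\phi_{i+1}(w'_{i+1}))=(w_i,w'_i)\in E(F_i)$. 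Taking $v=v'$ (legitimate since $E_{\mathcal{F}}$ is reflexive) shows that all elements of $E_{\mathcal{G}}(v)$ lie in one $E_{\mathcal{F}}$-class, so $\pi E_{\mathcal{G}}(v)$ is a single point; taking general $(v,v')\in E_{\mathcal{F}}$ shows this point is independent of the representative of $\pi v$. It then remains to check $E_{\mathcal{G}}(v)\neq\emptyset$: writing $W_i=\{u\in V_i:(v_i,u)\in E(G_i)\}$, the axiom \ref{DS1} (every vertex has an outgoing edge) gives $W_i\neq\emptyset$, and because $\phi_i$ is a graph homomorphism with $\phi_i(v_i)=v_{i-1}$ we get $\phi_i(W_i)\subseteq W_{i-1}$; thus $E_{\mathcal{G}}(v)=\varprojlim(W_i,\phi_i|_{W_i})$ is an inverse limit over $\nat$ of nonempty finite sets, hence nonempty.

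Finally, for continuity of $T$ I would argue via a closed graph. The graph of $T$ is exactly $(\pi\times\pi)(E_{\mathcal{G}})$, and $E_{\mathcal{G}}$ is closed in $V_{\mathcal{G}}\times V_{\mathcal{G}}$ by the same clopen-intersection argument as for $E_{\mathcal{F}}$; since $\pi\times\pi$ is a continuous map from the compact space $V_{\mathcal{G}}\times V_{\mathcal{G}}$ to the Hausdorff space $Y\times Y$ it is closed, so the graph of $T$ is closed in $Y\times Y$. As $Y$ is compact Hausdorff, a map into $Y$ with closed graph is automatically continuous, giving $T\in C(Y,Y)$ and hence the dynamical system $(Y,T)$. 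I expect the main obstacle to be the well-definedness step: one must extract from \ref{DS3} simultaneously that $\pi E_{\mathcal{G}}(v)$ collapses to a point and that this point is representative independent, and recognize that the levelwise device (reading an $F_{i-1}$-relation off data at level $i$) is precisely what the twinning axioms \ref{DS3} and \ref{DS3b} are engineered to supply.
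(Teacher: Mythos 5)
Your proof is correct. Its first half coincides with the paper's argument: reflexivity and symmetry of $E_{\mathcal{F}}$ from \ref{DS2}, transitivity from \ref{DS3b} applied one level up (your shift from level $i+1$ down to level $i$ is exactly the device the paper uses), and well-definedness of $T$ from \ref{DS3} applied levelwise — your auxiliary statement is the paper's argument almost verbatim. You diverge in two places. First, non-emptiness of $E_{\mathcal{G}}(v)$: the paper disposes of it in one line, invoking both the outgoing-edge condition and edge-surjectivity of the $\phi_i$ from \ref{DS1}, whereas your identification $E_{\mathcal{G}}(v)=\varprojlim(W_i,\phi_i|_{W_i})$, nonempty as an inverse limit of nonempty finite sets, is cleaner and shows the outgoing-edge condition alone suffices. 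Second, and more substantially, continuity: the paper constructs explicit $E_{\mathcal{F}}$-saturated open sets $\tilde{\Cee}(x,j)$, shows their $\pi$-images form a neighbourhood basis of the quotient topology, and proves $E_{\Gee}(\overline{\Cee}(x,k+1,i))\subset\overline{\Cee}(E_{\Gee}x,k,i-1)$ by induction on $i$ using \ref{DS3}; you instead observe that $E_{\mathcal{F}}$ and $E_{\mathcal{G}}$ are closed (being intersections of clopen coordinatewise conditions), invoke the standard facts that the quotient of a compact metric space by a closed equivalence relation is compact metrizable with $\pi$ a closed map, identify $\mathrm{Graph}(T)=(\pi\times\pi)(E_{\mathcal{G}})$, and conclude by the closed-graph theorem for maps into a compact Hausdorff codomain. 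Your route is shorter, and it has the genuine merit of addressing Hausdorffness and metrizability of $Y$ explicitly — properties the paper's definition of a dynamical system requires but whose verification the paper leaves largely implicit; the price is reliance on two nontrivial (though standard and citable) results of general topology, whereas the paper's hands-on construction is self-contained and yields an explicit basis of open sets in $Y$ that can be reused in subsequent arguments (as the paper's $\tilde{\Cee}$-machinery indeed is within that same proof).
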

\begin{proof}
	We prove that $E_{\Fee}$ is an equivalence relation. Reflexivity and symmetry of $E_{\mathcal{F}}$ are direct consequences of graphs $F_{i}$ being non directed. Transitivity is implied by \ref{DS3b}.
	
	We consider $Y$ as a topological space with the quotient topology. Firstly, we will prove that $T$ is well defined. Let $x,x'\in V_{\mathcal{G}}$ satisfy $\pi x=\pi x'$. Take any $y,y'\in V_{\Gee}$ with $y\in E_{\Gee}x$ and $y'\in E_{\Gee}x'$. As every vertex has outgoing edge and $\phi_{i}$ are edge-surjective, such points always exist. Fix $i\in\nat$, then $(x_{i},y_{i}),(x'_{i},y'_{i})\in E(G_{i})$. Using property \ref{DS3} we obtain $(y_{i-1},y'_{i-1})=(\phi_{i}(y_{i}),\phi_{i}(y'_{i}))\in E(F_{i-1})$. As $i$ was arbitrary, we learn that $\pi y=\pi y'$. Consequently, $\pi E_{\mathcal{G}}(x)=\pi E_{\mathcal{G}}(x')$ is a singleton and $T(\pi x)=T(\pi x')$, which proves that $T$ is well-defined.
	
	We check that $T$ is continuous. Let $\overline{\Cee}(x,j)=\bigcup\{\Cee(v,j)~|~(v,x_{j})\in E(F_{j})\}$ and extend definition further, for any $A\subset V_{\Gee}$, put $\overline{\Cee}(A,j)=\bigcup_{a\in A}\overline{\Cee}(a,j)$. Inductively define, for $i>j$, $\overline{\Cee}(x,j,i)=\overline{\Cee}(\overline{\Cee}(x,j,i-1),i)$ and $\overline{\Cee}(x,j,j)=\overline{\Cee}(x,j)$. Finally, put $\tilde{\Cee}(x,j)=\bigcup_{i\geq j}\overline{\Cee}(x,j,i)$ and $\tilde{\Cee}(A,j)=\bigcup_{x\in A}\tilde{\Cee}(x,j)$. Let us note few facts about those sets. We first show that for every  $x'\in\tilde{\Cee}(x,j)$  we have $\pi x'\subset \tilde{\Cee}(x,j)$. Let $y\in\pi x'$, then $(x'_{n},y_{n})\in E(F_{n})$ for every $n\in\nat$. One can find $i_{0}\geq j$ with $x'\in\overline{\Cee}(x,j,i_{0})$. Note that $\overline{\Cee}(x,j,i_{0})$ is a sum of cylinders, so we choose $i_{1}\geq i_{0}$ with $\Cee(x',i)\subset \overline{\Cee}(x,j,i_{0})$ for every $i>i_{1}$. We observe \[ \overline\Cee(\Cee(x',i),i+1)\subset \overline\Cee(\overline{\Cee}(x,j,i_{0}),i+1)\subset \overline\Cee(\overline{\Cee}(x,j,i_{0}),i_{0}+1)\subset \overline{\Cee}(x,j,i_{0}+1)\subset \tilde{\Cee}(x,j).\] As $y\in\Cee(y,i+1) \subset \overline\Cee(\Cee(x',i),i+1)$, we have proved $y\in \tilde{\Cee}(x,j)$.
	
	As a consequence of above, $\pi^{-1}\pi(\tilde{\Cee}(x,j))=\tilde{\Cee}(x,j)$, which together with openness of $\tilde{\Cee}(x,j)$ implies $\pi(\tilde{\Cee}(x,j))$ is open in $Y$. Note that if $\pi y,\pi y'\in\pi(\tilde{\Cee}(x,j))$, then $(y_{n},y'_{n})\in E(F_{n})$ for $n$ up to $j$. We conclude that $\bigcap _{j\geq 0}\pi(\tilde{\Cee}(x,j))=\{\pi x\}$. The mapping $\pi$ is continuous by definition, hence $X$ is compact. Moreover, $\tilde{\Cee}(x,i'')\subset\tilde{\Cee}(x,i')$ for $i''>i'$, and so sets of the form $\pi(\tilde{\Cee}(x,j))$ generate topology. Note that if $y\in E_{\Gee}x$, then $\pi(\tilde{\Cee}(y,j))$ contains $\pi y =T(\pi x )$, thus $\pi(\tilde{\Cee}(y,j))$, as well as $\pi (\tilde{\Cee}(E_{\Gee}x,j ))$, is an open neighbourhood of $T(\pi x)$. Therefore, to check continuity of $T$ for every $x\in X$ and $k\geq 0$ we have to find $j\geq 0$ such that \[ T\pi(\tilde{\Cee}(x,j))=\pi E_{\Gee}(\tilde{\Cee}(x,j))\subset \pi(\tilde{\Cee}(E_{\Gee}x,k)).\] It is enough to show $E_{\Gee}(\tilde{\Cee}(x,j))\subset \tilde{\Cee}(E_{\Gee}x,k)$. Put $j=k+1$. By induction on $i\geq k+1$, we will show $E_{\Gee}(\overline{\Cee}(x,k+1,i))\subset \overline{\Cee}(E_{\Gee}x,k,i-1)$. Let $i=k+1$ and $x'\in \overline{\Cee}(x,k+1)$. Then $(x'_{k+1},x_{k+1})\in E(F_{k+1})$ and by property \ref{DS3}, $(y'_{k},y_{k})\in E(F_{k})$ for every $y'\in E_{\Gee}x'$ and $y\in E_{\Gee}x$, showing $E_{\Gee}(\overline{\Cee}(x,k+1))\subset \overline{\Cee}(E_{\Gee}x,k)$. Let us assume $E_{\Gee}(\overline{\Cee}(x,k+1,i))\subset \overline{\Cee}(E_{\Gee}x,k,i-1)$ for some $i\geq k+1$. Take $x'\in \overline{\Cee}(x,k+1,i+1)$. It satisfies $(x'_{i+1},x''_{i+1})\in E(F_{i+1})$ for some $x''\in \overline{\Cee}(x,k+1,i)$. Once again, by property \ref{DS3}, for every $y'\in E_{\Gee}x'$ and $y''\in E_{\Gee}x''$ we have $(y'_{i},y''_{i})\in E(F_{i})$. Consequently, \[ y'\in \overline{\Cee}(E_{\Gee}(\overline{\Cee}(x,k+1,i)),i)\subset \overline{\Cee}(\overline{\Cee}(E_{\Gee}x,k,i-1),i)=\overline{\Cee}(E_{\Gee}x,k,i),\]which proves $E_{\Gee}(\overline{\Cee}(x,k+1,i+1))\subset \overline{\Cee}(E_{\Gee}x,k,i)$ and concludes proof of continuity of $T$.
\end{proof}
For the sake of the following proof, let us define a disjoint union $\coprod_{i\in I}\See_{i}$ of families $\{\See_{i}\}_{i\in I}$ indexed by a set $I$ by $\coprod_{i\in I}\See_{i}=\{(S,i)\colon S\in \See_{i}\}$ and projections \[ \coprod_{i\in I}\See_{i}\ni s=(S,i)\mapsto \p s =S\in\bigcup_{i\in I}\See_{i}\text{ and } \]
\[ \coprod_{i\in I}\See_{i}\ni s=(S,i)\mapsto \indexset s =i\in I. \]
Recall $\pi\colon V_{\mathcal{G}}\to {V_{\mathcal{G}}}/_{E_{\mathcal{F}}}$ is the projection on equivalence classes.
\begin{thm} \label{space->graph,nonzero}
Let $(X,f)$ be a dynamical system. Then there exists an inverse sequence $(\mathcal{G},\mathcal{F})$ of twinned graph homomorphisms such that $(X,f)$ is conjugate to $(Y,T)$, where $Y={V_{\mathcal{G}}}/_{E_{\mathcal{F}}}$ and $T$ is defined by $\{T(\pi v)\}=\pi E_{\mathcal{G}}(v)$ for $v\in V_{\mathcal{G}}$.
\end{thm}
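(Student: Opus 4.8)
The plan is to reverse-engineer the construction from Theorem~\ref{graph->space, nonzero}. I must build, from an arbitrary dynamical system $(X,f)$, a sequence of finite covers whose combinatorics record exactly which points are close (the $\mathcal{G}$-data, driving the dynamics) and which points collapse together in the limit (the $\mathcal{F}$-data, encoding the quotient). The target is a conjugacy, so the identification relation $E_{\mathcal F}$ must be fine enough that $V_{\mathcal G}/E_{\mathcal F}$ separates exactly the points of $X$, no more and no less.

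First I would fix a sequence of finite open covers $\mathcal{U}_i$ of $X$ with $\mesh\mathcal{U}_i\to 0$, refining at each step, and I would choose them compatibly with $f$ so that the image $f(U)$ of each member can be covered by members of the coarser cover. I would let $\vx(G_i)=\vx(F_i)$ be an indexing set for (a labelled version of) $\mathcal{U}_i$, with $|\vx(G_0)|=1$ as required by \ref{DS0}. The maps $\phi_i$ come from the refinement relation between $\mathcal{U}_i$ and $\mathcal{U}_{i-1}$, as in Theorem~\ref{zero->ciag-odw}. The key new ingredient is the two separate edge relations on the same vertex set: I would define $E(G_i)$ from the dynamics via a relation of the form $f^{\mathcal{U}_i}$ of Definition~\ref{df1}, namely $(u,v)\in E(G_i)$ when $f(\p u)$ meets $\p v$, and I would define $E(F_i)$ from pure spatial overlap, namely $(u,v)\in E(F_i)$ when $\cl{\p u}\cap\cl{\p v}\neq\emptyset$ (so that $(v,v)\in E(F_i)$ automatically, giving reflexivity for \ref{DS2}).

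The central verification is that these choices satisfy the twinning axioms \ref{DS0}--\ref{DS3b}, and here the compatibility conditions \ref{DS3} and \ref{DS3b} are the ones that constrain how finely the covers must be chosen relative to each other. Axiom \ref{DS3b} (overlap-of-overlaps projects to an overlap) I would arrange by taking $\mathcal{U}_{i}$ so fine that the union of two overlapping members is contained in a single member of $\mathcal{U}_{i-1}$, which forces their $\phi_i$-images to coincide or at least overlap. Axiom \ref{DS3}, which couples the $F$-relation to $f$-images under the $G$-edges, I would secure using uniform continuity of $f$: by choosing $\mesh\mathcal{U}_i$ small enough against the Lebesgue number of $\mathcal{U}_{i-1}$ and the modulus of continuity of $f$, overlapping sets are sent by $f$ into sets whose $\phi_i$-images still overlap. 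I would then invoke Theorem~\ref{graph->space, nonzero} to conclude that $(Y,T)$ is a well-defined dynamical system.

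Finally I would produce the conjugacy. The natural candidate is the map $h\colon Y\to X$ sending a class $\pi x$, with $x=(x_i)\in V_{\mathcal G}$, to the unique point in $\bigcap_i \cl{\p{x_i}}$; the nested-compact-sets argument together with $\mesh\mathcal{U}_i\to 0$ shows this intersection is a single point, and the $F$-identification is exactly what makes $h$ well-defined on classes (two points in the same $E_{\mathcal F}$-class have the same intersection point). I would check $h$ is a continuous bijection — surjectivity from the covers being covers, injectivity from the covers separating points in the limit — hence a homeomorphism by compactness, and verify $h\circ T=f\circ h$ by comparing the defining formula $\{T(\pi v)\}=\pi E_{\mathcal G}(v)$ against the $f$-edge definition of $E(G_i)$. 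I expect the main obstacle to be calibrating the three interlocking fineness requirements (for $\phi_i$ edge-surjectivity in \ref{DS1}, for \ref{DS3}, and for \ref{DS3b}) simultaneously, and in particular ensuring that $h$ is injective: this is precisely the separation-of-points difficulty flagged in the introduction, and it is resolved by having $E_{\mathcal F}$ collapse only genuine overlaps rather than all nearby points, so that distinct points of $X$, being separated by some fine cover, land in distinct classes.
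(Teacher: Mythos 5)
Your proposal follows the paper's architecture quite closely: labelled refining covers as the vertex sets, $G$-edges defined by $f(\p{u})\cap\p{v}\neq\emptyset$, a spatial-overlap relation for the $F$-edges, and the conjugacy sending a class $\pi x$ to the unique point of $\bigcap_i\clu\p{x_i}$; the well-definedness, injectivity, surjectivity, continuity and equivariance steps you sketch at the end all go through essentially as in the paper. The one substantive divergence is your definition of $E(F_i)$ by intersection of \emph{closures}, whereas the paper declares $(u,v)\in E(F_i)$ iff the fattened sets $(\p{u})^{\epsilon_i}$ and $(\p{v})^{\epsilon_i}$ meet, with the fattening parameters controlled recursively by condition \ref{c2}. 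That difference is exactly where your argument has a genuine gap: the calibration you propose for axioms \ref{DS3} and \ref{DS3b} does not secure them. The flaw is that $\phi_i(a)$ is the \emph{designated parent} of $a$ under the refinement assignment; it need not be, nor even touch, the single member of $\mathcal{U}_{i-1}$ that swallows a short chain of overlapping level-$i$ sets, so ``the union of two overlapping members lies in one member of $\mathcal{U}_{i-1}$'' does not force the parents' closures to intersect.

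Concretely, take $X=[0,1]$, $f=\id$, and $\mathcal{U}_{i-1}=\{A,M,B\}$ with $A=[0,0.4)$, $M=(0.3,0.7)$, $B=(0.41,1]$; then $\leb(\mathcal{U}_{i-1})=0.1$ while $\clu A\cap\clu B=\emptyset$ and $\dist(A,B)=0.01$. Choose level-$i$ vertices $\p{a}=(0.37,0.395)$ with $\phi_i(a)=A$, $\p{b}=(0.39,0.42)$ with $\phi_i(b)=M$, and $\p{c}=(0.415,0.44)$ with $\phi_i(c)=B$. All diameters are far below $\leb(\mathcal{U}_{i-1})$, the union of the three sets lies inside the single member $M$, and consecutive closures intersect; yet $(\phi_i(a),\phi_i(c))=(A,B)$ has disjoint closures, so \ref{DS3b} fails, and taking $a'=a$, $b'=c$ (legitimate $G$-edges since $f=\id$) the same configuration violates \ref{DS3}. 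The quantity your fineness condition must beat is not the Lebesgue number but the minimal positive gap $\delta_{i-1}=\min\{\dist(V,W)\colon V,W\in\mathcal{U}_{i-1},\ \clu V\cap\clu W=\emptyset\}$, which is positive because $\mathcal{U}_{i-1}$ is finite and disjoint compact sets are at positive distance; requiring $2\max\{\mesh\mathcal{U}_i,\mesh f(\mathcal{U}_i)\}<\delta_{i-1}$ repairs your closure-based construction, since any two-step chain (respectively any pair of $f$-images of overlapping sets) then has ends closer than $\delta_{i-1}$, forcing the parents' closures to meet. The paper avoids introducing $\delta_{i-1}$ altogether by fattening: its $F_{i-1}$-relation uses $\epsilon_{i-1}$-neighbourhoods with $\epsilon_{i-1}\geq\mesh\mathcal{U}_{i-1}$, while condition \ref{c2} makes all level-$i$ fattened sets much smaller than $\epsilon_{i-1}$, so nearby chains automatically land inside the fattened parents; injectivity of the conjugacy survives because the $\epsilon_i$ tend to zero. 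Either repair works, but as written your verification of the two key axioms is incorrect.
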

\begin{proof}
	For any open set $U$ and $\epsilon>0$ put $U^{\epsilon}=B(U,\epsilon)$. Similarly, for any open cover $\mathcal U$, let $\mathcal U^{\epsilon}=\{U^{\epsilon}\}_{U\in\mathcal U}$. Let $\{\Uee_{i}\}_{i\in\nat}$ be a sequence of finite, open, covers of $X$ satisfying the following conditions for every $i=1,2,\ldots$
	\begin{enumerate}[label=(C\arabic*)]\label{conditions on covers}
		\item \label{c1}$\mathcal{U}_0=\{X\}$ and $\emptyset\notin \Uee_{i}$,
		\item \label{c2}cover $\mathcal U_{i}^{\epsilon_{i}}$ refines $\mathcal{U}_{i-1}$, where $\epsilon_{i}=\max\{\mesh f(\mathcal{U}_{i}),\mesh \mathcal{U}_{i}\}$, and \[ 2\max\{\mesh f(\mathcal U_{i}^{\epsilon_{i}}),\mesh \mathcal U_{i}^{\epsilon_{i}}\}<\epsilon_{i-1} \]
		\item \label{c4}we have $\mesh \mathcal{U}_{i} \leq 2^{-i}$, 
		\item\label{c5} we have $V=\bigcup \Uee_{i,V}$ for every $V\in \mathcal U_{i-1}$, where $\Uee_{i,V}=\{U\in \mathcal U_{i}\colon U\subset V\}$.
	\end{enumerate}
	Sequence $\{\mathcal{U}_i\}_{i=0}^{\infty}$ can be defined inductively. Let us suppose $\mathcal{U}_{i-1}$ is constructed, $i>0$. There exists a sequence of open covers $\Vee_{n}$, $n\in\nat$, such that $\Vee_{n}$ refines $\Vee_{n-1}$, $\mesh \Vee_{n}\to0$ as $n\to\infty$, and every element of $\Uee_{i-1}$ is a union of some elements of $\Vee_{n}$. Put $\epsilon'_{n}=\max\{\mesh f(\mathcal{V}_{n}),\mesh \mathcal{V}_{n}\}$. Then $\alpha_{n}=\max\{\mesh \mathcal V_{n}^{\epsilon'_{n}},\mesh f(\mathcal V_{n}^{\epsilon'_{n}})\}\to 0$ as $n\to\infty$. Let $n_{0}\in\nat$ be big enough so that $\alpha_{n_{0}}<\min\{\epsilon_{i-1},2^{-i},\leb(\Uee_{i-1})\}$. It follows that conditions \ref{c2} to \ref{c5} are satisfied with $\Uee_{i}:=\Vee_{n_{0}}$.
	
	Define $G_{0}$ and $F_{0}$ by \[ V(G_{0})=\coprod\Uee_{0}=\{(X,0)\},\quad E(G_{0})=\{((X,0),(X,0))\}\]
	\[ V(F_{0})=V(G_{0}),\quad E(F_{0})=E(G_{0}).\] Then inductively define two inverse sequences
	\[
	\mathcal{G} :=G_{0} \xleftarrow{\phi_1}  G_{1} \xleftarrow{\phi_2}...,
	\]
	\[
	\mathcal{F} :=F_{0} \xleftarrow{\phi_1}  F_{1} \xleftarrow{\phi_2}...,
	\]
	where \[ V(G_{i})=\coprod_{v\in V(G_{i-1})}\Uee_{i,\p v},\quad (a,b)\in E(G_{i})\text{ iff } f(\p a)\cap \p b\neq\emptyset\text{ and }\]
	\[ V(F_{i})=V(G_{i}),\quad (a,b)\in E(F_{i})\text{ iff } (\p a)^{\epsilon_{i}}\cap (\p b)^{\epsilon_{i}}\neq\emptyset. \]
	Finally, we put $\phi_{i}(a)=\iota(a)$. Note that $\phi_{i}(a)=c$ is an element of disjoint union, hence we can apply projection to $c$ to obtain $\p c\in\mathcal U_{i-1}$. It will be denoted by $\p c =\p \phi_{i}(a)$. We check that $(\mathcal G,\mathcal F)$ is an inverse sequence of twinned graph homomorphisms. For every $i>0$ mapping $\phi_{i}$ is a homomorphism for both $\Gee$ and $\Fee$. Indeed, if $f(\p a)\cap \p b\neq\emptyset$, then, as $\p a\subset \p \phi_{i}(a)$ and $\p b\subset \p \phi_{i}(b)$, we deduce $f(\p a)\cap \p b\subset f(\p\phi_{i}(a))\cap \p\phi_{i}(b)$ is non empty. Analogously, if $(\p a)^{\epsilon_{i}}\cap (\p b)^{\epsilon_{i}}\neq\emptyset$, then $(\p a)^{\epsilon_{i}}\cap (\p b)^{\epsilon_{i}}\subset(\p \phi_{i}(a))^{\epsilon_{i-1}}\cap (\p \phi_{i}(b))^{\epsilon_{i-1}}$ is non empty. Moreover, $\phi_{i}\colon G_{i}\to G_{i-1}$ are edge surjective. Certainly, if $(a,b)\in E(G_{i-1})$, then $f(\p a)\cap \p b\ne\emptyset$ and so, as $\p a=\bigcup \mathcal U_{i,\p a}$ and $\p b=\bigcup \mathcal U_{i,\p b}$, we can find $a',b'\in V(G_{i})$ with $f(\p a')\cap \p b'\neq\emptyset$ and $\phi_{i}(a')=a$, $\phi_{i}(b')=b$. It implies $(a',b')\in E(G_{i})$. We check \ref{DS3}. Let $i>0$. If $(a,b)\in E(F_{i})$ and $(a,a'),(b,b')\in E(G_{i}) $, then $\epsi{\p a}\cap \epsi{\p b}\neq\emptyset$, $f(\p a)\cap \p a'\neq\emptyset$ and $f(\p b)\cap \p b'\neq\emptyset$. Consequently, $f(\epsi{\p a})\cap f(\epsi{\p b})\neq\emptyset$.  As \[ \max\{\diam(\p a'\cup f(\epsi{\p a})),\diam(\p b'\cup f(\epsi{\p b}))\}<2\max\{\mesh\mathcal U_{i},\mesh f(\mathcal U_{i}^{\epsilon_{i}})\}<\epsilon_{i-1}, \]  we have \[ f(\epsi{\p a})\subset \eps{\p \phi_{i}(a')}\text{ and }\] \[ f(\epsi{\p b})\subset \eps{\p \phi_{i}(b')}. \] We conclude \[ \emptyset\neq f(\epsi{\p a})\cap f(\epsi{\p b})\subset \eps{\p \phi_{i}(a')}\cap \eps{\p \phi_{i}(b')}. \] Therefore $(\phi_{i}(a') , \phi_{i}(b'))\in E(F_{i-1})$. Condition \ref{DS3b} is proved similarly. If $(a,b),(b,c)\in E(F_{i})$, then $\epsi{\p a}\cap\epsi{\p b}\neq\emptyset$ and $\epsi{\p b}\cap\epsi{\p c}\neq\emptyset$. It follows that \[ \max\{\diam(\epsi{\p a}\cup \epsi{\p b}),\diam(\epsi{\p b}\cup \epsi{\p c})\}<2\mesh \mathcal U_{i}^{\epsilon_{i}}<\epsilon_{i-1}, \]and consequently \[ 
	\epsi{\p b}\subset \eps{\p a}\subset\eps{\p \phi_{i}(a)}\text{, and }  \]\[  
	\epsi{\p b}\subset \eps{\p c}\subset\eps{\p \phi_{i}(c)}.  \]Hence $\eps{\p \phi_{i}(a)}\cap \eps{\p \phi_{i}(c)}$ is non empty and $(\phi_{i}(a),\phi_{i}(c))\in E(F_{i-1})$.
	
	Therefore, the pair $(\Gee,\Fee)$ is an inverse sequence of twinned graph homomorphisms, which gives rise to a dynamical system $(Y,T)$.
	
	We will now show that dynamical systems $(X,f)$ and $(Y,T)$ are conjugate. Noting that for every $x\in V_{\mathcal G}$ intersection $\bigcap_{i=0}^{\infty} \epsi{\p x_i}=\bigcap_{i=0}^{\infty} \clu \p x_i$ is a singleton by \ref{c2},\ref{c4} and \ref{c5}, we define the function 
	\[ \psi \colon Y\ni \pi x\mapsto  \psi(\pi x)\in X \] to satisfy
	\[
	\{\psi(\pi x)\} = \bigcap_{i=0}^{\infty} \clu\p x_i \subset X.
	\]
	We will prove the following.
	\begin{description} 
		\item[Function $\psi$ is well defined.] Let $x,y \in V_{\mathcal{G}}$ be such that $(x,y)\in E_{\Fee}$. Note that $\{\clu \p x_{i}\}$, $\{\clu \p y_{i}\}$ and $\{ \clu \epsi{\p x_{i}}\cap \clu \epsi{\p y_{i}}\}$ are decreasing sequences of non empty compact sets, with diameters tending to zero. We conclude $\bigcap_{i=0}^{\infty}\clu \p x_i=\bigcap_{i=0}^{\infty}\clu\epsi{ \p  x_i}\cap\clu\epsi{ y_i}=\bigcap_{i=0}^{\infty}\clu \p y_i$.
		\item[Function $\psi$ is surjective.] It is a direct consequence of the construction.
		\item[Function $\psi$ is injective.] Let $x,y \in V_{\mathcal{G}}$ and suppose that $\psi(\pi x)=\psi(\pi y)$. Then $\psi(\pi x) \in \clu \p x_i \cap \clu \p y_i $, for every $i \in \mathbb{N}$, hence $\epsi{\p x_i} \cap \epsi{\p y_i}\neq\emptyset$ and in consequence $\pi x=\pi y$.
		\item[Function $\psi$ is continuous.] Let $U \in \topol(X)$. Showing $\psi^{-1}(U)\in \topol(Y)$, amounts to proving that $Z=\bigcup\psi^{-1}(U)=\pi^{-1}\psi^{-1}(U)$ is open. Fix any $a \in Z$, we then have $\{\psi(\pi a)\}=\bigcap_{i=0}^{\infty}\clu \p a_i \subset U$. As $X$ is compact and $U$ open, we deduce that there exists $l \in \mathbb{N}$ such that $\bigcap_{i=0}^{l}\clu \p a_i \subset U$. Let $b\in\Cee(a_{l},l)$. It follows $\psi(\pi(b)) \in \bigcap_{i=0}^{l}\clu \p  a_i \subset U$ and so $\Cee(a_{l},l)\subset \pi^{-1}\psi^{-1}(U)$.
		\item[Function $\psi$ commutes with $T$ and $f$.] Let $x,y \in V_{\mathcal{G}}$ be such that $T(\pi x)=\pi y$. Then 
			\[
			\{\psi(T(\pi x))\}=\bigcap_{i=0}^{\infty}\clu \p y_i
			\] and
			\[
			\{f(\psi(\pi x))\}=f(\bigcap_{i=0}^{\infty}\clu \p x_i)
			.\] 
			We can find $a_{x}$ and $a_{y}$ in $X$ such that
			\begin{gather*}
			\bigcap_{i=0}^{\infty}\clu \p x_i=\{a_{x}\}\text{ and }
			\bigcap_{i=0}^{\infty}(f(\clu \p x_{i})\cap \clu \p y_i)=\{a_{y}\},
			\end{gather*}
			which implies that $f(a_{x})=a_{y}$. Finally, we have
			\[
			\{f(\psi(\pi x))\}=f(\bigcap_{i=0}^{\infty}\clu \p x_i)=f(a_{x})=a_{y}=\bigcap_{i=0}^{\infty}\clu \p y_i=\{\psi(T(\pi x))\} .\qedhere
			\]
	\end{description}
\end{proof}

Taking into account Theorems \ref{graph->space, nonzero} and \ref{space->graph,nonzero} we obtain
\begin{thm}\label{mainres}
	Every dynamical system is conjugate to the system defined by inverse sequence of twinned graph homomorphisms. 
\end{thm}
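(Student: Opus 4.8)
The plan is to obtain this statement as an immediate consequence of the two preceding theorems, which together package the two directions of the correspondence between dynamical systems and inverse sequences of twinned graph homomorphisms; no new construction is required.

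First I would fix an arbitrary dynamical system $(X,f)$, recalling that under our standing conventions this means $X$ is compact metric and $f\colon X\to X$ is continuous, with no surjectivity assumed. I would then invoke Theorem \ref{space->graph,nonzero}, which produces an inverse sequence $(\mathcal{G},\mathcal{F})$ of twinned graph homomorphisms, together with the associated objects $V_{\mathcal{G}}$, $E_{\mathcal{G}}$ and $E_{\mathcal{F}}$, and asserts that $(X,f)$ is conjugate to the quotient system $(Y,T)$ with $Y=V_{\mathcal{G}}/_{E_{\mathcal{F}}}$ and $T$ determined by $\{T(\pi v)\}=\pi E_{\mathcal{G}}(v)$ for $v\in V_{\mathcal{G}}$.

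Second I would appeal to Theorem \ref{graph->space, nonzero} to guarantee that this $(Y,T)$ is genuinely a dynamical system. Since $(\mathcal{G},\mathcal{F})$ satisfies the axioms \ref{DS0}--\ref{DS3b}, that theorem yields that $E_{\mathcal{F}}$ is an equivalence relation, so $Y$ is a well-defined compact metric quotient, and that $T$ is a well-defined continuous self-map. The essential point to record is that the two theorems speak of the \emph{same} object: the formula for $T$ and the construction of $Y$ coincide, so the system to which $(X,f)$ is declared conjugate in Theorem \ref{space->graph,nonzero} is exactly the system whose good properties are certified by Theorem \ref{graph->space, nonzero}. Combining the two then produces a conjugacy between the arbitrary system $(X,f)$ and a system defined by an inverse sequence of twinned graph homomorphisms, which is the assertion.

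I do not expect a substantive obstacle here, since all of the genuine work has already been done in the two cited proofs: the verification of the twinning axioms for the cover-based construction, the continuity and well-definedness of $T$, and the construction and analysis of the conjugating map $\psi$. The only thing deserving a moment's care is the purely bookkeeping check that the notation $(Y,T)$ appearing in the hypothesis of Theorem \ref{graph->space, nonzero} matches the output of Theorem \ref{space->graph,nonzero}, which it does by construction; thus the proof is a one-line deduction from the two results.
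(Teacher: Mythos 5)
Your proposal is correct and is essentially identical to the paper's own proof: the paper also obtains Theorem \ref{mainres} as an immediate combination of Theorem \ref{space->graph,nonzero} (which constructs the twinned sequence and the conjugacy for an arbitrary system) and Theorem \ref{graph->space, nonzero} (which certifies that the quotient $(Y,T)$ is a genuine dynamical system). No gaps; the bookkeeping observation that both theorems refer to the same $(Y,T)$ is exactly the point that makes the one-line deduction valid.
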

\begin{rem}
	Due to non constructive nature of relation given by Definition \ref{df1}, it is hard to explicitly construct examples of even simple dynamical system encoded as inverse limit of twinned graph homomorphisms. It seems that if one was to apply Theorem \ref{mainres} it would be in a non constructive setting, for example like in paper \cite{Shimomura2014}.
\end{rem}
\textbf{Acknowledgements} The last part of proof of Theorem \ref{space->graph,nonzero} was author's undergraduate diploma project, supervised by Dominik Kwietniak, I am grateful for his support. The rest of this note was written during authors first years of PhD studies at AGH Doctoral School. I would like to thank my advisor Jan P. Boro\'{n}ski, who suggested publishing above results and provided with many helpful comments and modifications. This work was partially supported by the National Science Centre, Poland (NCN), ~grant no. 2019/34/E/ST1/00237.
\bibliography{main}
\bibliographystyle{alpha}
\end{document}